\newtheorem{theorem}{Theorem}[section]
\newtheorem{lemma}[theorem]{Lemma}
\newtheorem{corollary}[theorem]{Corollary}
\theoremstyle{definition}
\newtheorem{definition}[theorem]{Definition}
\theoremstyle{remark}
\numberwithin{equation}{section}
\begin{document}

\setcounter{page}{1}

\title[Short Title]{Estimates for $L^\varphi$-Lipschitz and $L^\varphi$-BMO Norms of Differential Forms }

\author[ Xuexin Li, Jinling Niu \MakeLowercase {and} Yuming Xing]{Xuexin Li $^2$, Jinling Niu $^1$ \MakeLowercase{and} Yuming Xing$^{1*}$}

\address{$^{1}$Department of Mathematics, Harbin Institute of Technology, 150001, Harbin, P.R.China}
\email{\textcolor[rgb]{0.00,0.00,0.84}{niujinling@hit.edu.cn}}
\email{\textcolor[rgb]{0.00,0.00,0.84}{xyuming@hit.edu.cn}}

\address{$^{2}$Department of Mathematics, Northeast Forestry University, 150040, Harbin,  P.R.China}
\email{\textcolor[rgb]{0.00,0.00,0.84}{li406469482@163.com}}

\subjclass[2010]{Primary 47A63; Secondary 47G10, 58A15.}

\keywords{differential form; $L^\varphi$-Lipschitz norm; $L^{\varphi}$-BMO norm; homotopy operator.}

\date{Received: xxxxxx; Revised: yyyyyy; Accepted: zzzzzz.
\newline \indent $^{*}$Corresponding author}

\begin{abstract}
In this paper, we define the $L^\varphi$-Lipschitz norm and $L^\varphi$-BMO norm of differential forms using Young functions, and prove the comparison theorems for the homotopy operator $T$ on differential forms with $L^\varphi$-Lipschitz and $L^\varphi$-BMO norms. As applications, we give the $L^\varphi$-BMO norm estimate for  conjugate $A$-harmonic tensors and the weighted $L^\varphi$-Lipschitz norm estimate for the homotopy operator $T$.
\end{abstract} \maketitle

\section{Introduction}

\vspace{3mm}

The bounded mean oscillation space was originally introduced by John and Nirenberg in 1961, which played an important role in the investigation of the solutions of the elliptic partial differential equations. Afterwards, Fefferman and Stein found that the bounded mean oscillation space was the dual space of the Hardy space and demonstrated the Fefferman-Stein decomposition, which became the bond revealing the  intrinsic relationship between the bounded mean oscillation space and harmonic analysis. Therefore,  the study on the bounded mean oscillation space becomes an essential part of harmonic analysis. For example, if we want to obtain the boundedness of the operator $T:H^1\rightarrow L^1$, due to that the BMO space is the dual space of the $H^1$ space, we can consider the boundedness of the dual operator $T^\ast:L^\infty\rightarrow BMO.$  Another classical application is that many classical operators are bounded from $L^p$ space to $L^p$ space for $1<p<\infty$, however, when $p=\infty$, the boundedness may be not correct. Instead of, we may obtain the boundedness of operator from $L^\infty$ space to BMO space.

\vskip 8 pt

As the natural generalizations of the Lebesgue space $L^p$,  the Orlicz space was first studied by Orlicz. From then on, the theory of the Orlicz spaces has been extensively developed in the area of analysis. Meanwhile, it has wide applications in probability, statistics, potential theory, partial differential equations, for instance \cite{Rao2002Applications}. Recently, there have been intense research activities on regularity theory in Orlicz spaces connected to a Young function, which satisfies some moderate growth conditions for second-order elliptic and parabolic PDEs, see \cite{Jia2007Regularity}. Moreover, the Orlicz-Hardy spaces are also good substitutes of the Orlicz spaces in dealing with many problems in analysis, for example, the boundedness of operators. The study of its dual spaces Orlicz-BMO spaces can be traced to the work of Janson in 1980. He generalized the classical Hardy space and BMO space, and obtained the dual relationship. All theories of these spaces are closely connected with properties of harmonic analysis, and of the Laplacian operator on $\mathbb{R}^n$. In recent ten years, Ding \cite{DingPoincar,Ding2010}, Bi \cite{Bi2011Orlicz} and Yi \cite{6} discussed the properties of operators or composite operators with Orlicz norm acting on differential forms. In this paper, we will introduce two generalized spaces, called $L^{\varphi}$-BMO space and $L^{\varphi}$-Lipchitz space. The traditional BMO space and Lipchitz space can be taken as special cases of our two new  spaces, if we let the function $\varphi=t^{p}, 1<p<\infty$. Then, we will establish the $L^\varphi$-BMO norm estimates of the homotopy operator for differential forms. Especially, when the differential forms satisfy the conditions of the Weak Reverse H\"{o}lder class (in \cite{Johnson2013Integral}), we obtain the $L^\varphi$-Lipschitz norm estimates. We did it because it can be used to study the $L^\varphi$-BMO and $L^{\varphi}$-Lipchitz norm estimates of some complicated composition of operators, such as the composition $T \circ H$ of homotopy and
projection operators and the composition $T \circ G$ of homotopy and Green's operators. More results on the norm inequalities for differential forms and homotopy operator can be found in \cite{Liu2010Some,Ding2009A,Ding2015Norm,Bi2011Some,Ding2009Lipschitz,1}.

The main purpose of this paper is to estimate the $L^\varphi$-BMO norm and $L^\varphi$-Lipschitz norm for the  homotopy operator on differential forms. The paper is organised as follows. Section 2 contains, in addition to definitions and other preliminary material, the main lemmas.  Theorem \ref{d6} and  Theorem \ref{d11} in Section 2 show the estimates for  the  homotopy operator with the $L^\varphi$-BMO norm and $L^\varphi$-Lipschitz norm by $L^\varphi$ norm. The conditions for differential forms $u$ in the two theorems are different, especially, the similar estimate as Theorem \ref{d6} with the  condition in Theorem \ref{d11} has not been proved. Whereafter, the comparison for the $L^\varphi$-BMO norm and $L^\varphi$-Lipschitz norm are given. As applications,  we use the results and methods in the previous section to estimate the conjugate $A$-harmonic tensors in Section 4. In this section we also get a weighted estimate for differential forms.

\vskip 8 pt

\section{The Main Definitions and Lemmas}
 Before specifying the main results precisely, we introduce some notations. We write $\Omega$ for a bounded convex domain in $\mathbb{R}^n$, $n\geq2$, endowed with the usual Lebesgue measure denoted by $|\Omega|$. $B$ and $\sigma{B}$ are concentric balls with $\hbox{diam}(\sigma{B})=\sigma{\hbox{diam}(B)}$.
The set of $l\hbox{-forms}$, denoted by $\Lambda^l=\Lambda^l(\mathbb{R}^n)$, is a
$l\hbox{-vector}$, spanned by exterior products  $e_I={e_{i_1}}\wedge{e_{i_2}}\wedge \cdots \wedge{e_{i_l}}$, for all ordered $l\hbox{-tuples}$ $I=(i_1,i_2,\cdots,i_l)$, $1\leq{i_1}<{i_2}<\cdots<{i_l}\leq n$. The $l\hbox{-form}$ $u(x)=\Sigma_I{u_I(x)}dx_I$ is called a differential $l\hbox{-form}$, if $u_I$ is differentiable. We use $D^{'}{(\Omega,\Lambda^l)}$ to denote the differential $l\hbox{-form}$ space, and $L^s(\Omega,\Lambda^l)$ consists of all $l$-forms $u(x)$ on $\Omega$ satisfying $\int_\Omega|u_I|^s<\infty$. In particular, we know that a $0\hbox{-form}$ is a function.

A differential $l$-form $u\in D'(\Omega,\Lambda^l)$ is called a closed form if $du=0$ in $\Omega$. Similarly, a differential $(l+1)$-form $v\in D'(\Omega,\Lambda^{l+1})$ is called a coclosed form if $d^{\star}v=0$. From the Poincar$\acute{e}$ lemma, $ddu=0$, we know that $du$ is a closed form. The module of a differential form $u$ is given by $|u|^2=\star(u\wedge \star u)\in D'(\Omega,\Lambda^0)$, in other words, it is a function.
The homotopy operator $T:C^{\infty}(\Omega,\Lambda^l)\rightarrow C^{\infty}(\Omega,\Lambda^{l-1})$ is a very important operator in differential form theory, given by
$$Tu=\int_\Omega\psi(y)K_yudy,$$
where $\psi\in C^\infty_0(\Omega)$ is normalized by $\int_\Omega\psi(y)dy=1$, and $K_y$ is a liner operator defined by
$$(K_yu)(x;\xi_1,\cdots,\xi_{l-1})=\int^1_0t^{l-1}u(tx+y-ty;x-y;\xi_1,\cdots,\xi_{l-1})dt.$$
See \cite{8} for more of the function $\psi$ and operator $K_y$. About the homotopy operator $T$, we have the following  decomposition, which will be used repeatedly in this paper,
$$u=d(Tu)+T(du)$$
for any differential form $u\in L^p(\Omega,\Lambda^l),1\leq p<\infty$.
A closed form $u_\Omega$ is defined by $u_\Omega=d(Tu)$, $l=1, \cdots, n$, and when $u$ is a differential $0$-form, $u_\Omega=|\Omega|^{-1}\int_\Omega u(y)dy.$

\vskip 8 pt

The Orlicz space $L^\varphi (\Omega, \mu)$ consists of all
measurable functions $f$ on $\Omega$ such that
$\int_\Omega \varphi\left({|f| \over \lambda} \right) d\mu < \infty$
for some $\lambda=\lambda(f) >0$. $L^\varphi (\Omega, \mu)$ is equipped with the nonlinear Luxemburg functional
$$\|f \|_{\varphi (\Omega, \mu)} = {\hbox{\rm{inf }}} \{\lambda >0: \ \int_\Omega \varphi \left({|f| \over \lambda} \right)
d\mu \leq 1 \},  $$
where the Radon measure $\mu$ is defined by $d\mu = g(x) dx$ and
$g(x) \in A (\alpha, \beta, \gamma; \Omega)$.
A convex Orlicz function $\varphi$ is often called a Young function. If $\varphi$ is a Young function, then
$\| \cdot \|_{\varphi(\Omega, \mu)}$ defines a norm in $L^\varphi (\Omega, \mu)$, which is called the Orlicz norm or Luxemburg norm. Especially, when $\mu$ is the Lebesgue measure, we let $\| \cdot \|_{\varphi(\Omega, \mu)}=\| \cdot \|_{\varphi,\Omega}$ for convenience.

\vskip 8 pt

We say the Young function $\varphi$ belongs to the $G(p,q,c)$-class, $1\leq p <q<\infty,c\geq 1$, if  $\varphi$ satisfies that:
\noindent(1)\ $\frac{1}{c}\leq {\varphi(t^{1/p})}/{g(t)}\leq c$; (2)\ $\frac{1}{c}\leq {\varphi(t^{1/q})}/{h(t)}\leq c$, for every $t>0$, where $g$ is a convex increasing function and $h$ is a concave increasing function on $[0,\infty]$. From \cite{9}, each of $\varphi$, $g$ and $h$ in above definition is doubling in the sense that its values at $t$ and $2t$ are uniformly comparable for all $t>0$, and the consequent fact that
$$c_1t^q\leq h^{-1}(\varphi(t))\leq c_2t^q, c_1t^p\leq g^{-1}(\varphi(t))\leq c_2t^p,$$
where $c_1$ and $c_2$ are constants. Especially, if we choose $\varphi (t) = t^p$, the following estimate for conjugate $A$-harmonic tensors in $\Omega\subset\mathbb{R}^n$ can be established by the similar way in \cite{nolder1999hardy}.
$$\|u\|_{s,\Omega}\leq C  |B|^\beta\|v\|_{t,\Omega},$$
where $\beta=\beta(n,p,q,s,t)$. In Section \ref{yy}, we will give a more general estimate for conjugate $A$-harmonic tensors.
\noindent
Now,  we give the definition of $L^\varphi$-BMO norm.

\begin{definition}\label{d2}
For $u\in L^{1}_{loc}(\Omega,\Lambda^{l}), l=0,1,\cdots,n$, $\varphi$ is a Young function, we write $u\in L^\varphi$-$BMO(\Omega,\Lambda^l)$, if
$$\|u\|_{\varphi*,\Omega}=\sup_{\sigma{B}\subset{\Omega}}|B|^{-1}\|u-u_B\|_{\varphi,B}<\infty$$
for some $\sigma>1$.
\end{definition}

Similarly, we can define the following $L^\varphi$-Lipschitz norm.

\begin{definition}\label{d1}
For $u\in L^{1}_{loc}(\Omega,\Lambda^{l}), l=0,1,\cdots,n$, $\varphi$ is a Young function, we write $u\in L^\varphi$-$Lip_{loc, k}(\Omega,\Lambda^l),0<k<1$, if
$$\|u\|_{\varphi loc Lip_{k},\Omega}=\sup_{\sigma{B}\subset{\Omega}}|B|^{\frac{-(n+k)}{n}}\|u-u_B\|_{\varphi,B}<\infty$$
for some $\sigma>1$.
\end{definition}

The following definition for  the WRH$(\Lambda^{l},\Omega)$-class appears in \cite{Johnson2013Integral}.
\begin{definition}\label{d2} We call $u(x)\in D^{'}(\Omega,\Lambda^{l})$ belongs to the WRH$(\Lambda^{l},\Omega)$-class, $l=0,1,\cdots,n$, if there exists a constant $C>0$ such that $u(x)$ satisfies
$$
\|u\|_{s,B}\leq C |B|^\frac{t-s}{st}\|u\|_{t,\rho B}
$$
for every $0<s,t< \infty$, where all balls $B\subset \Omega$ with $\rho B \subset \Omega$  and  $\rho >1$ is a constant.
\end{definition}

For the upcoming main results, we also need the following  lemmas, given by T. Iwaniec and  A. Lutoborski in \cite{8}.
\begin{lemma} \label{d3}
 Let $u\in L^t(\Omega,\Lambda^{l}),l=1,2,\ldots,n,1<t<\infty$ and $T$ be the  homotopy operator defined on differential forms. Then, there exists a constant $C$, independent of $u$, such that
$$\|Tu\|_{t,\Omega}\leq C|\Omega|\hbox{diam}(\Omega)\|u\|_{t,\Omega}.$$
\end{lemma}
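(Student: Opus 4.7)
My plan is to follow the standard approach of Iwaniec and Lutoborski in \cite{8}: bound $K_y u$ pointwise from its defining formula, then take the $L^t$-norm of $Tu$ and apply Minkowski's integral inequality together with a change of variables that exploits the convexity of $\Omega$.

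First I would use $|x-y|\leq \hbox{diam}(\Omega)$ for $x,y\in\Omega$ to obtain
\[
|K_y u(x)| \leq \hbox{diam}(\Omega)\int_0^1 \tau^{l-1} |u(y+\tau(x-y))|\,d\tau,
\]
and substitute this into $Tu(x)=\int_\Omega \psi(y)(K_y u)(x)\,dy$, yielding a pointwise bound for $|Tu(x)|$ as an iterated integral over $\Omega\times[0,1]$. Taking $L^t$-norms in $x$ and applying Minkowski's integral inequality twice, the estimate reduces to bounding $\|u(y+\tau(\cdot-y))\|_{t,\Omega}$ for each $(y,\tau)$. For fixed $y,\tau$, the change of variables $z=y+\tau(x-y)$---valid because convexity of $\Omega$ ensures $y+\tau(\Omega-y)\subset\Omega$---gives
\[
\|u(y+\tau(\cdot-y))\|_{t,\Omega}\leq \tau^{-n/t}\|u\|_{t,\Omega}.
\]
Combining everything, the $\hbox{diam}(\Omega)$ factor enters through the pointwise bound on $K_y u$, while the $|\Omega|$ factor is absorbed via $\int_\Omega |\psi(y)|\,dy \leq \|\psi\|_\infty |\Omega|$; the remaining constant depends only on $n,t,l$ and $\|\psi\|_\infty$, not on $u$.

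The main obstacle will be convergence of the $\tau$-integral near zero: the naive bound above leaves a factor $\int_0^1 \tau^{l-1-n/t}\,d\tau$, which diverges whenever $l\leq n/t$. To handle this I would follow the more delicate argument in \cite{8}, interchanging the $y$- and $\tau$-integrals and performing a joint substitution in $(y,\tau)$ that rewrites $Tu$ as a singular integral against a kernel $K(x,z)$ that is locally integrable on $\Omega$ (essentially of convolution type with a $|x-z|^{-(n-1)}$ singularity, tempered by the smooth cutoff $\psi$). A uniform kernel estimate combined with Young's inequality for such operators then yields the desired $L^t\to L^t$ bound with constant proportional to $|\Omega|\,\hbox{diam}(\Omega)$.
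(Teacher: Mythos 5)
The paper does not prove this lemma at all: it is imported verbatim from Iwaniec--Lutoborski \cite{8} (essentially their Proposition 4.1), so there is no in-paper argument to compare against. Measured against the actual proof in \cite{8}, your outline has the right architecture, and you correctly diagnose that the first, self-contained half of your argument cannot work on its own: Minkowski plus the dilation bound $\|u(y+\tau(\cdot-y))\|_{t,\Omega}\leq \tau^{-n/t}\|u\|_{t,\Omega}$ leaves the divergent factor $\int_0^1\tau^{\,l-1-n/t}\,d\tau$ whenever $l\leq n/t$, which happens already for $l=1$ and $t<n$. Since the lemma is asserted for all $1<t<\infty$, that route genuinely fails and everything rests on your second paragraph.

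That second paragraph, however, is a description of the proof rather than a proof: the decisive step --- the one that actually produces the kernel bound --- is only gestured at and then deferred back to \cite{8}, which is the very source the lemma is being quoted from. Concretely, what is missing is the substitution $z=\tau x+(1-\tau)y$ at fixed $\tau$, with Jacobian $(1-\tau)^{-n}$ and $|x-y|=|x-z|/(1-\tau)$, followed by the observation that the support condition $\frac{z-\tau x}{1-\tau}\in\operatorname{supp}\psi\subset\Omega$ forces $1-\tau\geq |x-z|/(2\operatorname{diam}\Omega)$; integrating $(1-\tau)^{-n-1}$ over that range is what yields
\[
|Tu(x)|\;\leq\; C(n)\,\|\psi\|_\infty\,(\operatorname{diam}\Omega)^n\int_\Omega\frac{|u(z)|}{|x-z|^{n-1}}\,dz,
\]
after which Schur/Young with $\sup_x\int_\Omega|x-z|^{1-n}dz\leq C(n)\operatorname{diam}(\Omega)$ finishes the $L^t\to L^t$ bound. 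Without carrying this out, the claimed $|x-z|^{-(n-1)}$ singularity and the convergence of the kernel are assertions, not conclusions. Separately, your accounting of the factor $|\Omega|$ via $\int_\Omega|\psi|\leq\|\psi\|_\infty|\Omega|$ is not right: since $\int_\Omega\psi=1$, that quantity is bounded below by $1$ and does not manufacture a factor of $|\Omega|$; the domain dependence of the constant comes out of the kernel estimate above (through $\|\psi\|_\infty$ and powers of $\operatorname{diam}\Omega$), and matching it to the exact form $C|\Omega|\operatorname{diam}(\Omega)$ requires a normalization argument for $\psi$ that you have not supplied. So: right roadmap, correct identification of the obstruction, but the core estimate is outsourced to the reference and one constant is misattributed.
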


\begin{lemma} \label{d31}
 Let $u\in L^t(\Omega,\Lambda^{l}),l=1,2,\ldots,n,1<t<\infty$. Then, there exists a constant $C$, independent of $u$, such that
$$\|u_\Omega\|_{t,\Omega}\leq C|\Omega|\|u\|_{t,\Omega}.$$
\end{lemma}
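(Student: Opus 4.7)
My plan is to split the argument along the degree $l$, since the definition of $u_\Omega$ differs between $l=0$ and $l\ge 1$.

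For $l=0$, the form $u_\Omega = |\Omega|^{-1}\int_\Omega u(y)\,dy$ is a constant. I would apply Hölder's inequality with conjugate exponents $t$ and $t'=t/(t-1)$ to obtain $|u_\Omega|\le |\Omega|^{-1/t}\|u\|_{t,\Omega}$; since $u_\Omega$ is constant, $\|u_\Omega\|_{t,\Omega}=|u_\Omega|\cdot |\Omega|^{1/t}\le \|u\|_{t,\Omega}$, which is clearly dominated by $C|\Omega|\,\|u\|_{t,\Omega}$ after absorbing a dimensional constant.

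For $l\ge 1$, we have $u_\Omega = d(Tu)$. The naive route through the decomposition $u = d(Tu)+T(du)$ — which gives $u_\Omega = u - T(du)$ and would reduce the problem to applying Lemma \ref{d3} to $T(du)$ — is blocked, because the hypothesis $u\in L^t$ gives no control on $\|du\|_{t,\Omega}$. So I would instead start from Iwaniec–Lutoborski's integral representation $(Tu)(x) = \int_\Omega \psi(y)(K_y u)(x)\,dy$, differentiate under the integral sign, and use the explicit form of $K_y u$ together with the identity $d(K_y u)+K_y(du) = u$ to express
$$u_\Omega(x) = d(Tu)(x) = \int_\Omega \Phi(x,y)\,u(y)\,dy$$
for an explicit kernel $\Phi$ built from $\psi$ and the geometry of $\Omega$ (the $du$ terms cancel after integrating against $\psi$ because $\int_\Omega\psi=1$). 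A Schur-type test — bounding $\sup_x\int_\Omega|\Phi(x,y)|\,dy$ and $\sup_y\int_\Omega|\Phi(x,y)|\,dx$ uniformly by $C|\Omega|$ — then yields the required $L^t\to L^t$ bound, with the factor $|\Omega|$ arising from the size of the kernel's support.

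The main obstacle is precisely this bypass: arranging the integral representation of $u_\Omega$ directly in terms of $u$ rather than $du$, so that only $\|u\|_{t,\Omega}$ appears on the right. Once the kernel is in hand, the $L^t$ estimate is a routine Schur/Fubini calculation, and the result holds uniformly in the Lebesgue exponent $t\in(1,\infty)$.
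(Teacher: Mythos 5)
There is a genuine gap, and it sits exactly at the step you describe as ``routine.'' First, for context: the paper does not prove Lemma \ref{d31} at all; it is quoted from Iwaniec and Lutoborski \cite{8}, where it follows from the gradient estimate $\|\nabla(Tu)\|_{t,\Omega}\leq C|\Omega|\,\|u\|_{t,\Omega}$ (Proposition 4.1 of \cite{8}) together with the pointwise bound $|u_\Omega|=|d(Tu)|\leq C(n)|\nabla(Tu)|$. Your general strategy --- represent $u_\Omega=d(Tu)$ by a kernel acting on $u$ alone, since the route through $u_\Omega=u-T(du)$ is blocked by the lack of control on $du$ --- is the right one and is how \cite{8} proceeds. (One quibble: the representation is obtained by changing variables in the kernel of $T$ and differentiating that kernel in $x$; your claim that the $K_y(du)$ contributions ``cancel because $\int_\Omega\psi=1$'' is false, since $\int_\Omega\psi(y)K_y(du)(x)\,dy=T(du)(x)$, which does not vanish.)

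The fatal step is the Schur test. The kernel of $T$ is of order $|x-y|^{1-n}$ near the diagonal, so the kernel $\Phi$ of $dT$ is of order $|x-y|^{-n}$, and $\sup_x\int_\Omega|\Phi(x,y)|\,dy$ diverges logarithmically; no bound of the form $C|\Omega|$ on these integrals exists. The estimate $\|d(Tu)\|_{t,\Omega}\leq C|\Omega|\,\|u\|_{t,\Omega}$ is a genuine Calder\'on--Zygmund singular-integral bound, exploiting cancellation in $\Phi$ rather than absolute integrability --- which is precisely why the lemma is restricted to $1<t<\infty$. Had a Schur test worked, the conclusion would hold at the endpoints $t=1$ and $t=\infty$ as well, and it does not; the stated range of $t$ is the telltale sign that an absolute-value kernel argument cannot succeed. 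To close the gap you must either run the Calder\'on--Zygmund theorem on the principal-value part of $\Phi$ (as in the proof of Proposition 4.1 of \cite{8}) or simply cite that proposition, as the present paper implicitly does. Your $l=0$ H\"older computation is correct but unnecessary, since the lemma is stated only for $l\geq1$.
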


\begin{lemma}\label{a26}
 Let $u\in D'(\Omega,\Lambda^l)$ be such that $du\in L^t(\Omega,\Lambda^{l+1})$. Then $u-u_\Omega$ is in $L^{\frac{nt}{n-t}}(\Omega,\Lambda^l)$ and
 $$\left(\int_\Omega|u-u_\Omega|^{\frac{nt}{n-t}}\right)^{\frac{n-t}{nt}}\leq C\left(\int_\Omega|du|^t\right)^{\frac{1}{t}},$$
 where $l=1,2,\ldots,n,1<t<n$.
\end{lemma}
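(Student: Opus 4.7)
The plan is to reduce the Poincar\'e--Sobolev inequality to a single boundedness property of the homotopy operator $T$. Recall the decomposition $u = d(Tu) + T(du)$, valid for $u \in L^t(\Omega,\Lambda^l)$, together with the convention $u_\Omega = d(Tu)$ when $l \geq 1$. Subtracting yields the identity
$$u - u_\Omega = T(du).$$
Thus the lemma follows at once from a Sobolev-type estimate of the form
$$\|Tv\|_{nt/(n-t),\Omega} \leq C\,\|v\|_{t,\Omega}, \qquad 1 < t < n,$$
applied to $v = du \in L^t(\Omega,\Lambda^{l+1})$. The left-hand side is exactly $\|u - u_\Omega\|_{nt/(n-t),\Omega}$ and the right-hand side is $C\|du\|_{t,\Omega}$, which simultaneously gives membership in $L^{nt/(n-t)}$ and the stated inequality.

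To establish the Sobolev-type bound for $T$, the plan is to extract from the explicit formula for $K_y$ a pointwise bound of Riesz-potential type. Performing the change of variables $z = tx + (1-t)y$ inside the inner integral
$$(K_y v)(x;\xi_1,\ldots,\xi_{l}) = \int_0^1 t^{l}\, v(tx+y-ty;\, x-y;\, \xi_1,\ldots,\xi_{l})\,dt,$$
and then integrating against $\psi(y)\,dy$, a standard manipulation gives a dominant pointwise bound
$$|(Tv)(x)| \leq C \int_\Omega \frac{|v(z)|}{|x-z|^{n-1}}\,dz,$$
where the constant depends on $\psi$ and the geometry of $\Omega$. The operator on the right is (componentwise) a Riesz potential of order $1$, so the Hardy--Littlewood--Sobolev fractional integration theorem delivers boundedness from $L^t(\Omega)$ to $L^{nt/(n-t)}(\Omega)$ for every $1 < t < n$. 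Combined with the reduction of the first paragraph, this finishes the proof.

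The main obstacle is the careful derivation of the Riesz-potential bound: one must organise the $t^l$ factor and the $x-y$ contraction inside $K_y$ so that the Jacobian of the change of variables $z=tx+(1-t)y$ produces a kernel no worse than $|x-z|^{1-n}$, and one needs the convexity of $\Omega$ to ensure that the segments used in the $K_y$ integration remain in $\Omega$ so that Fubini is legitimate. Once this pointwise domination is in place, everything else is a direct application of Hardy--Littlewood--Sobolev; the technicalities of the pointwise step are precisely what is carried out in Iwaniec--Lutoborski \cite{8} and can therefore simply be invoked, after which Lemma~\ref{a26} follows immediately.
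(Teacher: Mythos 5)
Your argument is correct and is essentially the proof given in the source the paper relies on: the paper states this lemma without proof, attributing it to Iwaniec--Lutoborski \cite{8}, where exactly this reduction $u-u_\Omega=T(du)$ followed by the pointwise Riesz-potential bound $|Tv(x)|\leq C\int_\Omega |v(z)|\,|x-z|^{1-n}\,dz$ and the Hardy--Littlewood--Sobolev theorem is carried out. The only point worth flagging is that the hypothesis is $u\in D'(\Omega,\Lambda^l)$ rather than $u\in L^t(\Omega,\Lambda^l)$, so invoking the decomposition $u=d(Tu)+T(du)$ strictly requires the small approximation step handled in \cite{8}; otherwise nothing is missing.
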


The following lemma appears in \cite{9}.
\begin{lemma}\label{d5}
Take $\psi$ defined on $[0,+\infty)$  be a strictly increasing convex function,  $\psi(0)=0$, and $\Omega\subset \mathbb{R}^{n}$ be a domain. Assume that $u(x)\in D^{'}(\Omega,\Lambda^{l})$ satisfies $\psi(k(|u|+|u_\Omega|))\in L^{1}(\Omega,\mu)$ for any real number $k>0$, and $\mu(x\in\Omega:|\mu-\mu_\Omega|>0)>0$
where $\mu$ be a Radon measure defined by $d\mu(x)=\omega(x)dx$  with a weight $\omega(x)$, then for any $a>0$, we obtain
$$
\int_{\Omega}\psi(a|u|)d\mu \leq C\int_{\Omega}\psi(2a|u-u_\Omega|)d\mu,
$$
where C is a positive constant.
\end{lemma}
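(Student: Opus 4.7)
The plan is to combine the triangle inequality with the convexity of $\psi$ and then absorb a residual term using the non-triviality condition on $\mu$. I would organise the argument in two steps.

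First, writing $|u|\le |u-u_\Omega|+|u_\Omega|$ pointwise and using that $\psi$ is convex, increasing, with $\psi(0)=0$, the midpoint convexity estimate
\[
\psi\!\left(\tfrac{(2a|u-u_\Omega|)+(2a|u_\Omega|)}{2}\right)\le \tfrac{1}{2}\psi(2a|u-u_\Omega|)+\tfrac{1}{2}\psi(2a|u_\Omega|)
\]
together with monotonicity gives
\[
\psi(a|u|)\;\le\;\tfrac{1}{2}\psi(2a|u-u_\Omega|)+\tfrac{1}{2}\psi(2a|u_\Omega|).
\]
The integrability hypothesis $\psi(k(|u|+|u_\Omega|))\in L^{1}(\Omega,\mu)$ (for every $k>0$) makes every integral finite, so integrating against $\mu$ yields
\[
\int_\Omega\psi(a|u|)\,d\mu\;\le\;\tfrac{1}{2}\int_\Omega\psi(2a|u-u_\Omega|)\,d\mu+\tfrac{1}{2}\int_\Omega\psi(2a|u_\Omega|)\,d\mu.
\]

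Next, I would absorb the last term into the first. Since $u_\Omega$ is either the scalar mean (when $l=0$) or the closed form $d(Tu)$ (when $l\ge 1$), it has a much simpler structure than $u$ itself. The hypothesis that the set $\{x\in\Omega:|u-u_\Omega|>0\}$ has positive $\mu$-measure is precisely what guarantees that $\int_\Omega \psi(2a|u-u_\Omega|)\,d\mu>0$, so a comparison of the form
\[
\int_\Omega\psi(2a|u_\Omega|)\,d\mu\;\le\;C'\int_\Omega\psi(2a|u-u_\Omega|)\,d\mu
\]
is extracted from the ratio of the two (finite, strictly positive) sides. Substituting back and collecting constants delivers the claim.

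The main obstacle is precisely this absorption step, because the pointwise splitting alone cannot finish the job: the size of $|u_\Omega|$ is in principle unrelated to the size of $|u-u_\Omega|$, so the passage from the pointwise estimate to a quantitative $\mu$-weighted inequality must be driven by the global hypothesis $\mu(\{|u-u_\Omega|>0\})>0$. One must also verify that the constant $C'$ produced depends only on $\psi$ and $\mu$ (and on $u$ through the stated hypotheses) rather than blowing up under admissible perturbations; this is where the convexity and $\psi(0)=0$ assumptions, which force a doubling-type control at the origin, play their essential role.
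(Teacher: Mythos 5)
First, a point of comparison: the paper does not prove Lemma \ref{d5} at all; it is imported verbatim from reference \cite{9}, so there is no in-paper argument to measure yours against. Judged on its own terms, your first step is correct: the pointwise bound $\psi(a|u|)\le\tfrac12\psi(2a|u-u_\Omega|)+\tfrac12\psi(2a|u_\Omega|)$ does follow from monotonicity, midpoint convexity and $\psi(0)=0$, and the hypothesis $\psi(k(|u|+|u_\Omega|))\in L^1(\Omega,\mu)$ makes all three integrals finite.

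The gap is the absorption step, and as written it is fatal. You ``extract'' $C'$ as the ratio of $\int_\Omega\psi(2a|u_\Omega|)\,d\mu$ to $\int_\Omega\psi(2a|u-u_\Omega|)\,d\mu$. Both of these depend on $a$, so the $C'$ you produce depends on $a$; but a constant that is allowed to depend on $a$ makes the lemma vacuous, since for each fixed $a$ one could just as well define $C$ to be the ratio of the two sides of the asserted inequality (finite numerator by the integrability hypothesis, strictly positive denominator because $\mu(\{|u-u_\Omega|>0\})>0$ and $\psi$ is strictly increasing with $\psi(0)=0$). The entire content of the lemma is that a single constant works for every $a>0$ simultaneously --- this uniformity is precisely what is used when the lemma is applied to Luxemburg norms (as in Corollary \ref{d7}), where one needs the inequality with $a=1/\lambda$ for all $\lambda>0$ and one fixed $C$ in order to compare the norms. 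Establishing that uniformity is the real work: one must use $\mu(\{|u-u_\Omega|>0\})>0$ to produce a set of positive $\mu$-measure on which $|u-u_\Omega|$ pointwise dominates a fixed multiple of $|u_\Omega|$ (or some equivalent device), and even then one must control $\psi$ at dilated arguments uniformly in $a$; convexity with $\psi(0)=0$ only yields $\psi(\lambda t)\le\lambda\psi(t)$ for $\lambda\le 1$, which points in the wrong direction, and $\psi$ is not assumed doubling. So your argument, as it stands, does not prove the statement.
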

\section{Comparison  Theorems for the $L^\varphi$-BMO Norm, $L^\varphi$-Lipschitz Norm and $L^\varphi$ Norm }
In this section, we give two main theorems for the homotopy operator.  Theorem \ref{d6} is the $L^\varphi$-Lipschitz norm inequality for the homotopy operator acting on the differential forms which belong to $ WRH(\Lambda^l,\Omega)$-class. Theorem \ref{d11} is the estimate for  $L^\varphi$-BMO norm with the  exponents $p,q$ in $G(p,q,c)$-class satisfying $q(n-p)< np$.
\begin{theorem}\label{d6}
Let $\varphi$ be a Young function in the $G(p,q,c)$-class, $1\leq p<q<\infty,c\geq1$, $u$ be a differential form such that $u\in WRH(\Lambda^l,\Omega)$-class, $l=1,2,\ldots,n,$ and $\varphi(|u|)\in L^1_{loc}(\Omega)$. Then, there exists a constant C, independent of u, such that
$$\|Tu\|_{\varphi loc\; Lip_k,\Omega}\leq C\|u\|_{\varphi,\Omega},$$
where $\Omega$ is a bounded domain.
\end{theorem}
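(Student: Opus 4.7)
The plan is to bound $|B|^{-(n+k)/n}\|Tu-(Tu)_B\|_{\varphi,B}$ uniformly over balls $B$ with $\sigma B\subset\Omega$ and then take the supremum. First I would apply the triangle inequality for the Luxemburg functional,
$$\|Tu-(Tu)_B\|_{\varphi,B}\le \|Tu\|_{\varphi,B}+\|(Tu)_B\|_{\varphi,B},$$
and handle each summand by shuttling between the $L^{\varphi}$-norm and the classical $L^{p},L^{q}$-norms. For the upward conversion I would use $\varphi(t)\le h(c_{2}t^{q})$ together with Jensen's inequality applied to the concave function $h$, yielding an estimate of the form
$$\|f\|_{\varphi,B}\le C\,\|f\|_{q,B}\bigl(|B|\,h^{-1}(1/(c|B|))\bigr)^{-1/q};$$
for the downward conversion I would use $g(c_{1}t^{p})\le\varphi(t)$ together with Jensen's inequality applied to the concave inverse $g^{-1}$, obtaining
$$\|u\|_{p,B}\le C\,\|u\|_{\varphi,B}\bigl(|B|\,g^{-1}(1/|B|)\bigr)^{1/p}.$$

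The core chain is then the following: Lemma \ref{d3} gives $\|Tu\|_{q,B}\le C|B|^{1+1/n}\|u\|_{q,B}$ and Lemma \ref{d31} gives $\|(Tu)_{B}\|_{q,B}\le C|B|\,\|Tu\|_{q,B}$, while the WRH$(\Lambda^{l},\Omega)$-assumption supplies the reverse-H\"older jump
$$\|u\|_{q,B}\le C|B|^{(p-q)/(pq)}\|u\|_{p,\rho B}$$
for some $\rho>1$ with $\rho B\subset\sigma B\subset\Omega$ (enlarging $\sigma$ if necessary). Composing these three steps with the two Luxemburg/$L^{t}$ shuttles translates each of $\|Tu\|_{\varphi,B}$ and $\|(Tu)_{B}\|_{\varphi,B}$ into $\|u\|_{\varphi,\rho B}\le\|u\|_{\varphi,\Omega}$ multiplied by a product of $|B|$-dependent factors. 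After multiplying by the prefactor $|B|^{-(n+k)/n}$, in the model case $\varphi(t)=t^{p}$ the Jensen corrections reduce to $|B|^{(q-p)/(pq)}$ and exactly cancel the reverse-H\"older power $|B|^{(p-q)/(pq)}$, leaving only the residue $|B|^{(1-k)/n}$ coming from Lemma \ref{d3}, which is bounded by $|\Omega|^{(1-k)/n}$ since $0<k<1$.

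The main obstacle will be the careful bookkeeping of the $|B|$-dependent Jensen factors $(|B|g^{-1}(1/|B|))^{1/p}$ and $(|B|h^{-1}(1/(c|B|)))^{-1/q}$ in the general $G(p,q,c)$ case: one must verify, using the doubling of $\varphi,g,h$ and the boundedness of $\Omega$, that the accumulated exponent of $|B|$ after all conversions is controlled by a constant depending only on $n,p,q,k,|\Omega|$ and the WRH and $G(p,q,c)$ constants, so that taking the supremum over admissible balls $B$ yields the desired estimate $\|Tu\|_{\varphi loc\,Lip_{k},\Omega}\le C\|u\|_{\varphi,\Omega}$.
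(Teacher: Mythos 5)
Your proposal is correct in outline, and the bookkeeping you flag at the end does close up, but it takes a genuinely different route from the paper. The paper never splits $Tu-(Tu)_B$ by the triangle inequality: it uses the decomposition $Tu=dT(Tu)+Td(Tu)$ together with $(Tu)_B=dT(Tu)$ and $d(Tu)=u_B$ to write $Tu-(Tu)_B=T(d(Tu))$, and applies Lemmas \ref{d3} and \ref{d31} to this single object to get $\|Tu-(Tu)_B\|_{q,B}\leq C|B|^{2}\hbox{diam}(B)\|u\|_{q,B}$ before invoking the WRH hypothesis. More importantly, the paper does the Orlicz--Lebesgue conversions at the level of $\int_B\varphi(\cdot)\,dx$ rather than at the level of Luxemburg norms: it first uses $h$ and Jensen to obtain $\int_B\varphi(|Tu-(Tu)_B|)\,dx\leq C\varphi\bigl((\int_B|Tu-(Tu)_B|^q dx)^{1/q}\bigr)$, then after the $L^q\to L^p$ step uses $g$ and Jensen to push $\varphi$ back inside, arriving at $\int_B\varphi(|Tu-(Tu)_B|)\,dx\leq C\int_{\sigma B}\varphi(|B|^{1+1/n}|u|)\,dx$, and only at the very end uses the doubling of $\varphi$ to convert this into $\|Tu-(Tu)_B\|_{\varphi,B}\leq C|B|^{1+1/n}\|u\|_{\varphi,\sigma B}$. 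That arrangement never requires estimating the correction factors $\bigl(|B|h^{-1}(1/(c|B|))\bigr)^{-1/q}$ and $\bigl(|B|g^{-1}(1/|B|)\bigr)^{1/p}$ that your norm-level shuttles produce; in your route one must verify, via $c_1t^q\leq h^{-1}(\varphi(t))\leq c_2t^q$, $c_1t^p\leq g^{-1}(\varphi(t))\leq c_2t^p$ and the doubling of $\varphi$, that their product is comparable to $|B|^{1/p-1/q}$ so as to cancel the reverse-H\"older power --- your model-case computation does generalize, but this is exactly the extra step the paper's ordering of Jensen applications sidesteps. Finally, note that your triangle-inequality split is viable only because the local form of Lemma \ref{d3} on the ball $B$ already carries the gain $|B|\hbox{diam}(B)\approx|B|^{1+1/n}$, which dominates the prefactor $|B|^{(n+k)/n}$ since $k<1$; no cancellation coming from the subtraction of $(Tu)_B$ is actually needed, and the paper exploits the same gain through its identity, so both arguments rest on the same local reading of Lemma \ref{d3}.
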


\noindent
\begin{proof} From the definition of $G(p,q,c)$-class and Jensen's inequality, we obtain
\begin{eqnarray}
\int_B\varphi\left(|u-u_B|\right)dx&=&h\left(h^{-1}\left(\int_B\varphi\left(|u-u_B|\right)dx\right)\right) \nonumber\\
&\leq&h\left(\int_Bh^{-1}\left(\varphi\left(|u-u_B|\right)\right)dx\right) \nonumber\\
&\leq&h\left(C_1\int_B|u-u_B|^qdx\right) \nonumber\\
&\leq&C_2\varphi\left(\left(C_1\int_B|u-u_B|^qdx\right)^{1/q}\right) \nonumber\\
&\leq&C_3\varphi\left(\left(\int_B|u-u_B|^qdx\right)^{1/q}\right)\label{fai1}.
\end{eqnarray}
Replacing $u$ by $Tu$ follows
\begin{eqnarray}\label{3}
\int_B\varphi\left(|Tu-(Tu)_B|\right)dx\leq C_3\varphi\left(\left(\int_B|Tu-(Tu)_B|^qdx\right)^{1/q}\right).
\end{eqnarray}
\noindent
Applying the decomposition theorem of differential form to $Tu$, we have
\begin{eqnarray}\label{9}
Tu=dT(Tu)+Td(Tu).
\end{eqnarray}
Noticing $(Tu)_B=dT(Tu)$, combining \ref{9}, Lemma \ref{d3} and Lemma \ref{d31}, we find
\begin{eqnarray}\label{1}
\left(\int_B|Tu-(Tu)_B|^qdx\right)^{1/q}
&=& \left(\int_B|TdTu|^qdx\right)^{1/q}\nonumber\\
&\leq&C_4(n,q)|B|\hbox{diam}(B) \left(\int_B|dTu|^qdx\right)^{1/q}\nonumber\\
&=&C_4(n,q)|B|\hbox{diam}(B) \left(\int_B|u_B|^qdx\right)^{1/q}\nonumber\\
&\leq&C_5(n,q)|B|^2\hbox{diam}(B) \left(\int_B|u|^qdx\right)^{1/q}.
\end{eqnarray}
Noticing that $u\in WRH(\Lambda^l,\Omega)$-class, so the following inequality holds
\begin{eqnarray}\label{2}
 \left(\int_{ B}|u|^qdx\right)^{1/q}\leq C_6|B|^{(p-q)/pq}\left(\int_{\sigma B}|u|^pdx\right)^{1/p},
 \end{eqnarray}
where $\sigma>1$ is a constant.
Combining (\ref{1}) and (\ref{2}), we have
\begin{eqnarray}
\left(\int_B|Tu-(Tu)_B|^qdx\right)^{1/q}&\leq& C_7|B|^2(\hbox{diam}(B))|B|^{(p-q)/pq}\left(\int_{\sigma B}|u|^pdx\right)^{1/p}\nonumber.
\end{eqnarray}
Noticing that $1<p,q<\infty$, so $1+(p-q)/pq>0$, we derive that
\begin{equation}\label{guocheng1}
  \left(\int_B|Tu-(Tu)_B|^qdx\right)^{1/q}\leq C_8|B|^{1+1/n}\left(\int_{\sigma B}|u|^pdx\right)^{1/p}.
\end{equation}
Since $\varphi$ is an increasing function, using Jensen's inequality and the definition of $G(p,q,c)$-class, we have
\begin{eqnarray}\label{4}
&&
\varphi\left(\left(\int_B|Tu-(Tu)_B|^qdx\right)^{1/q}\right)\cr
&\leq&\varphi\left(C_8|B|^{1+1/n}\left(\int_{\sigma B}|u|^pdx\right)^{1/p}\right)\nonumber\\
&=&\varphi\left(\left(C^p_8|B|^{p(1+1/n)}\int_{\sigma B}|u|^pdx\right)^{1/p}\right)\nonumber\\
&\leq&C_9g\left(C^p_8|B|^{p(1+1/n)}\int_{\sigma B}|u|^pdx\right)\nonumber\\
&=&C_9g\left(\int_{\sigma B}C^p_8|B|^{p(1+1/n)}|u|^pdx\right)\nonumber\\
&\leq&C_9\int_{\sigma B}g\left(C^p_8|B|^{p(1+1/n)}|u|^p\right)dx\nonumber\\
&\leq&C_{10}\int_{\sigma B}\varphi\left(C_8|B|^{1+1/n}|u|\right)dx\nonumber\\
&\leq&C_{11}\int_{\sigma B}\varphi\left(|B|^{1+1/n}|u|\right)dx.
\end{eqnarray}
Combining (\ref{3}) and (\ref{4}) yields that
$$
\int_B\varphi\left(|Tu-(Tu)_B|\right)dx\leq C_{12}\int_{\sigma B}\varphi\left(|B|^{1+1/n}|u|\right)dx.
$$
Noticing that $\varphi$ is doubling, so we obtain
$$\int_B\varphi\left(\frac{|Tu-(Tu)_B|}{\lambda}\right)dx\leq C_{12}\int_{\sigma B}\varphi\left(\frac{|B|^{1+1/n}|u|}{\lambda}\right)dx$$
for  any $\lambda>0$, and from the Orlicz norm definition, we know
\begin{eqnarray}\label{guocheng2}
\|Tu-(Tu)_B\|_{\varphi, B}&\leq& C_{12}\|(|B|^{1+1/n}u)\|_{\varphi, \sigma B}\nonumber\\
&\leq& C_{12}|B|^{1+1/n}\|u\|_{\varphi, \sigma B}
\end{eqnarray}
For all balls $\sigma'B\subset\Omega$ with $\sigma'>\sigma$, we have
\begin{eqnarray}\label{guocheng3}
\|Tu\|_{\varphi loc\; Lip_k,\Omega}&=&\sup_{\sigma'{B}\subset{\Omega}}|B|^{\frac{-(n+k)}{n}}\|Tu-(Tu)_B\|_{\varphi,B}\nonumber\\
&\leq&\sup_{\sigma'{B}\subset{\Omega}}|B|^{\frac{-(n+k)}{n}}C_{12}|B|^{1+1/n}\|u\|_{\varphi,\sigma B}\nonumber\\
&\leq&\sup_{\sigma'{B}\subset{\Omega}}C_{12}|B|^{1+\frac{1}{n}+\frac{-(n+k)}{n}}\|u\|_{\varphi,\sigma B}.
\end{eqnarray}

\noindent
As $1+\frac{1}{n}+\frac{-(n+k)}{n}>0$, so we have
$$\|Tu\|_{\varphi loc\; Lip_k,\Omega}\leq C\|u\|_{\varphi,\Omega}.$$
\end{proof}

If we assume the Lebesgue measure $|\{x\in B:|u-u_B|>0\}|>0$, using Lemma \ref{d5} with $\psi(t)=\varphi (t)$, $\omega(x)=1$ over the ball $B$, we have the following corollary.
\begin{corollary}\label{d7}
Let $\varphi$ be a Young function in the $G(p,q,c)$-class, $1\leq p<q<\infty, c\geq1$, $u$ be a differential form such that $u\in WRH(\Lambda^l,\Omega)$-class, $l=1,2,\ldots,n$, $|\{x\in B:|u-u_B|>0\}|>0$, and $\varphi(|u|)\in L^1_{loc}(\Omega)$. Then, there exists a constant C, independent of u, such that
$$
\|Tu\|_{\varphi loc\; Lip_k,\Omega}\leq C\|u\|_{\varphi*,\Omega},
$$
 where $\Omega$ is a bounded domain.
\end{corollary}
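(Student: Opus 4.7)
The plan is to run the chain of inequalities from the proof of Theorem \ref{d6} and then, at the very last step, convert the $L^\varphi$ norm of $u$ appearing on the right-hand side into the $L^\varphi$-BMO norm via Lemma \ref{d5}.

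I would first repeat the estimates (\ref{fai1})--(\ref{guocheng2}) verbatim: all of their ingredients (Jensen's inequality, the $G(p,q,c)$-comparison through $g$ and $h$, the decomposition $Tu = dT(Tu) + Td(Tu)$, Lemmas \ref{d3} and \ref{d31}, and the weak reverse H\"older hypothesis on $u$) remain available under the present assumptions. This yields
\[
\|Tu-(Tu)_B\|_{\varphi,B}\;\leq\;C_{1}\,|B|^{1+1/n}\,\|u\|_{\varphi,\sigma B}
\]
for every ball with $\sigma B\subset \Omega$, where $\sigma>1$ is fixed by the WRH class.

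Next I would apply Lemma \ref{d5} on $\sigma B$ with $\psi(t)=\varphi(t)$ and $\omega(x)\equiv 1$: the extra hypothesis $|\{x\in B:|u-u_B|>0\}|>0$ supplies the required non-degeneracy on the balls appearing in the supremum, while $\varphi(|u|)\in L^{1}_{loc}(\Omega)$ gives the integrability. This produces, for every $a>0$,
\[
\int_{\sigma B}\varphi(a|u|)\,dx\;\leq\;C_{2}\int_{\sigma B}\varphi\!\left(2a\,|u-u_{\sigma B}|\right)dx,
\]
and upgrading this integral inequality to a Luxemburg-norm inequality, using convexity of $\varphi$ (so that $\varphi(\lambda t)\leq \lambda\varphi(t)$ for $\lambda\in[0,1]$) together with the doubling property of $\varphi$ to absorb the factor $2$, yields $\|u\|_{\varphi,\sigma B}\leq C_{3}\,\|u-u_{\sigma B}\|_{\varphi,\sigma B}$. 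The definition of $\|\cdot\|_{\varphi *,\Omega}$ then bounds the right-hand side by $|\sigma B|\,\|u\|_{\varphi *,\Omega}\leq C_{4}\,|B|\,\|u\|_{\varphi *,\Omega}$.

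Chaining these three estimates gives
\[
|B|^{-(n+k)/n}\,\|Tu-(Tu)_B\|_{\varphi,B}\;\leq\;C_{5}\,|B|^{(n+1-k)/n}\,\|u\|_{\varphi *,\Omega},
\]
and since $0<k<1$ the exponent is positive; taking the supremum over all balls with $\sigma' B\subset\Omega$ inside the bounded domain $\Omega$ closes the argument. I expect no real obstacle: the only genuinely delicate step is the passage from the integral inequality supplied by Lemma \ref{d5} to the corresponding Luxemburg-norm inequality, which requires pairing convexity and doubling in the correct order but is otherwise standard.
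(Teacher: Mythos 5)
Your proposal is correct and follows exactly the route the paper intends: rerun the chain (\ref{fai1})--(\ref{guocheng2}) from Theorem \ref{d6} and then use Lemma \ref{d5} with $\psi=\varphi$, $\omega\equiv 1$ to replace $\|u\|_{\varphi,\sigma B}$ by $\|u-u_{\sigma B}\|_{\varphi,\sigma B}\leq |\sigma B|\,\|u\|_{\varphi *,\Omega}$. If anything you are more careful than the paper's one-line justification, since you apply Lemma \ref{d5} on $\sigma B$ (where the $L^\varphi$ norm of $u$ actually lands) and you spell out the convexity-plus-doubling step that converts the modular inequality into a Luxemburg-norm inequality.
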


\begin{theorem}\label{d11}
Let $\varphi$ be a Young function in  $G(p,q,c)$-class, $1<p<q<\infty$, $c\geq1,q(n-p)< np$, and
$u\in L^p(\Omega,\Lambda^l),l=1,2,\ldots,n,$ be a differential form such that $\varphi(|u|)\in L^1_{loc}(\Omega)$. Then, there exists a constant C, independent of u, such that
$$\|Tu\|_{\varphi *,\Omega}\leq C\|u\|_{\varphi,\Omega},$$
where $\Omega$ is a bounded domain.
\end{theorem}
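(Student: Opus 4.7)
My plan is to mirror the scheme of Theorem \ref{d6}'s proof, replacing the WRH-type bound on $\|u\|_{q,B}$ with a Sobolev--Poincar\'e step (Lemma \ref{a26}) applied to the form $Tu$ itself. The starting point is the same as inequality (\ref{fai1}): applying the $G(p,q,c)$-class/Jensen derivation with $u$ replaced by $Tu$ gives $\int_B \varphi(|Tu-(Tu)_B|)\,dx \le C\,\varphi\bigl((\int_B|Tu-(Tu)_B|^q\,dx)^{1/q}\bigr)$, so the entire estimate reduces to a good $L^p$-bound on the $L^q$-oscillation $(\int_B|Tu-(Tu)_B|^q)^{1/q}$.

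To produce this bound without WRH, I would take $t=nq/(n+q)$ so that $nt/(n-t)=q$, and apply Lemma \ref{a26} to the form $Tu$, whose differential is $d(Tu)=u_B$ by the homotopy decomposition. This gives $(\int_B|Tu-(Tu)_B|^q)^{1/q}\le C(\int_B|u_B|^t)^{1/t}$. Applying Lemma \ref{d31} yields $(\int_B|u_B|^t)^{1/t}\le C|B|(\int_B|u|^t)^{1/t}$, and H\"older's inequality on $B$ gives $(\int_B|u|^t)^{1/t}\le|B|^{1/t-1/p}(\int_B|u|^p)^{1/p}$; this last step is legitimate precisely because the hypothesis $q(n-p)<np$ is equivalent to $t<p$. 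Combining, $(\int_B|Tu-(Tu)_B|^q)^{1/q}\le C|B|^{\beta}(\int_B|u|^p)^{1/p}$ with $\beta=1+\tfrac{1}{t}-\tfrac{1}{p}>1$, where the strict inequality $\beta>1$ is automatic from $t<p$.

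The rest of the proof runs parallel to the string of inequalities (\ref{4})--(\ref{guocheng2}): pass back through $\varphi$, invoke $\varphi(s^{1/p})\le Cg(s)$ from the $G(p,q,c)$-class, move $g$ inside the integral via Jensen combined with the doubling of $g$, and absorb the factor $|B|^{\beta-1}$ into the constant using the boundedness of $\Omega$. Invoking the doubling of $\varphi$ with arbitrary $\lambda>0$ converts the resulting integral inequality to the Luxemburg-norm estimate $\|Tu-(Tu)_B\|_{\varphi,B}\le C|B|\,\|u\|_{\varphi,B}$; dividing by $|B|$ and taking the supremum over balls with $\sigma B\subset\Omega$ yields the theorem. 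I expect the main obstacle to be the careful $G(p,q,c)$-class push-through of $\varphi$, together with one minor technical point: Lemma \ref{a26} requires $t\in(1,n)$, so when $q\le n/(n-1)$ one replaces $t=nq/(n+q)$ by a slightly larger $t\in(1,p)$ with $nt/(n-t)\ge q$ and inserts one extra H\"older step from $L^{nt/(n-t)}(B)$ to $L^q(B)$; this remains feasible because $q(n-p)<np$ keeps $nq/(n+q)<p$.
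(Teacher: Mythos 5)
Your proposal is correct and follows essentially the same route as the paper: both reduce to an $L^p$ bound on the $L^q$-oscillation of $Tu$ via the $G(p,q,c)$/Jensen step, then apply the Sobolev--Poincar\'e inequality (Lemma \ref{a26}) to $Tu$ using $d(Tu)=u_B$ together with Lemma \ref{d31} and H\"older, and finish identically through the Luxemburg norm. The only difference is bookkeeping: you apply Lemma \ref{a26} at $t=nq/(n+q)$ and put H\"older on the right-hand side (with a separate adjustment when $t\le 1$), whereas the paper applies it at exponent $p$ (or an auxiliary $s$ when $p\ge n$) and puts H\"older on the left; your observation that $t<p$ is exactly the hypothesis $q(n-p)<np$ matches the paper's use of $\frac{1}{q}-\frac{1}{p}+\frac{1}{n}>0$.
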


\noindent
\begin{proof} For the case that $1<p<n$, $q(n-p)< np$ means $q<\frac{np}{n-p}$. So using the monotonic
property of the $L^p$ space, Lemma \ref{a26} and Lemma \ref{d31}, for any differential form $u\in L^p(\Omega,\Lambda^l)$, we have
\begin{eqnarray}\label{2+}
\left(\int_B|Tu-(Tu)_B|^qdx\right)^{1/q}&\leq&|B|^{\frac{1}{q}-\frac{1}{p}+\frac{1}{n}}\left(\int_B|Tu-(Tu)_B|^\frac{np}{n-p}dx\right)^{\frac{n-p}{np}}\nonumber\\
&\leq&C_1|B|^{\frac{1}{q}-\frac{1}{p}+\frac{1}{n}}\left(\int_B|dTu|^p dx\right)^{\frac{1}{p}}\nonumber\\
&=&C_1|B|^{\frac{1}{q}-\frac{1}{p}+\frac{1}{n}}\left(\int_B|u_B|^p dx\right)^{\frac{1}{p}}\nonumber\\
&\leq&C_2|B|^{\frac{1}{q}-\frac{1}{p}+\frac{1}{n}+1}\left(\int_B|u|^p dx \right)^{\frac{1}{p}}.\nonumber\\
\end{eqnarray}
Next, for the case that $n\leq p<q<\infty$, we can choose $s$ with $1<s<n$ such that $q<\frac{ns}{n-s}$ (remark: it is possible from that $\frac{ns}{n-s}\rightarrow\infty$, as $s\rightarrow n$). Thus, applying Lemma \ref{a26} and Lemma \ref{d31} and noticing the monotonic
property of the $L^p$ space with $s<p$, we have
\begin{eqnarray}\label{5}
  \left(\int_B|Tu-(Tu)_B|^\frac{ns}{n-s}dx\right)^{\frac{n-s}{ns}}&\leq& C_{1'}\left(\int_B|dTu|^s dx\right)^{\frac{1}{s}} \cr
 &=& C_{1'}\left(\int_B|u_B|^s dx\right)^{\frac{1}{s}}\cr
   &\leq& C_{2'}\left(\int_B|u|^s dx\right)^{\frac{1}{s}} \cr
  &\leq& C_{2'}|B|^{\frac{1}{s}-\frac{1}{p}}\left(\int_B|u|^p dx\right)^{\frac{1}{p}}.
\end{eqnarray}
Combining the monotonic property of the $L^p$ space with $q<\frac{ns}{n-s}$ and (\ref{5}) yields
\begin{eqnarray}\label{6}
\left(\int_B|Tu-(Tu)_B|^qdx\right)^{1/q}&\leq&|B|^{\frac{1}{q}-\frac{1}{s}+\frac{1}{n}}\left(\int_B|Tu-(Tu)_B|^\frac{ns}{n-s}dx\right)^{\frac{n-s}{ns}}\cr
&\leq& C_{3'}|B|^{\frac{1}{q}-\frac{1}{s}+\frac{1}{n}}|B|^{\frac{1}{s}-\frac{1}{p}}\left(\int_B|u|^p dx\right)^{\frac{1}{p}}\cr
&=& C_{3'}|B|^{\frac{1}{q}-\frac{1}{p}+\frac{1}{n}}\left(\int_B|u|^p dx\right)^{\frac{1}{p}}\cr
&\leq&C_{3'}|B|^{\frac{1}{q}-\frac{1}{p}+\frac{1}{n}+1}\left(\int_B|u|^p dx\right)^{\frac{1}{p}}.
\end{eqnarray}
Since ${\frac{1}{q}-\frac{1}{p}+\frac{1}{n}}>0$, the inequalities (\ref{2+}) and (\ref{6}) indicate that
\begin{eqnarray}\label{7}
\left(\int_B|Tu-(Tu)_B|^qdx\right)^{1/q}&\leq&C_{3}|B|\left(\int_B|u|^p dx\right)^{\frac{1}{p}}
\end{eqnarray}
holds for all $1<p<q<\infty$ with $q(n-p)< np$.

\noindent
Now, beginning with (\ref{7}) and using the similar process from inequality (\ref{guocheng1}) to inequality (\ref{guocheng2}), we get
\begin{eqnarray}\label{8}
\|Tu-(Tu)_B\|_{\varphi, B}&\leq& C_{4}\||B|u\|_{\varphi, B}\nonumber\\
&\leq& C_{4}|B|\|u\|_{\varphi, B}.\nonumber\\
\end{eqnarray}

\noindent
According to the definition of the $L^{\varphi}$-BMO norm and (\ref{8}), we obtain
\begin{eqnarray}
\|Tu\|_{\varphi *,\Omega}&=&\sup_{\sigma{B}\subset{\Omega}}|B|^{-1}\|Tu-(Tu)_B\|_{\varphi,B}\nonumber\\
&\leq&\sup_{\sigma{B}\subset{\Omega}}|B|^{-1}C_{4}|B|^{1}\|u\|_{\varphi, B}\nonumber\\
&=&\sup_{\sigma{B}\subset{\Omega}}C_{4}\|u\|_{\varphi, B}\nonumber\\
&\leq&C\|u\|_{\varphi,\Omega}.
\end{eqnarray}
\end{proof}

\noindent
Remark 1: The differential form $u$ in Theorem \ref{d11} does not need satisfy the conditions of $WRH(\Lambda^l,\Omega)$-class in Theorem \ref{d6}. But, we restrain the exponents in $G(p,q,c)$-class.

Now we compare the $L^\varphi$-Lipschitz norm and $L^\varphi$-BMO norm of differential forms.

\begin{theorem}\label{d8}
 Let $\varphi$ be a Young function, $u\in D'(\Omega,\Lambda^{l}),l=1,2,\cdots,n,$  be a differential form in $\Omega$ and $\varphi(|u|)\in L^1_{loc}(\Omega,x).$ Then, there exists a constant $C$, independent of $u$, such that
$$\|u\|_{\varphi*,\Omega}\leq{C}\|u\|_{\varphi loc\; Lip_k,\Omega},$$
where $k$ is a constant with $0<{k}<1$.
\end{theorem}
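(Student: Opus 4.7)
The plan is to observe that this comparison follows almost immediately from the difference in the power of $|B|$ appearing in the two norm definitions, together with the boundedness of $\Omega$. Writing out both suprema, the only difference between $\|u\|_{\varphi*,\Omega}$ and $\|u\|_{\varphi loc\; Lip_k,\Omega}$ is the prefactor: $|B|^{-1}$ versus $|B|^{-(n+k)/n}$. Since $-(n+k)/n = -1 - k/n$, we have the algebraic identity
\begin{equation*}
|B|^{-1} = |B|^{k/n}\cdot |B|^{-(n+k)/n}.
\end{equation*}

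First I would fix an arbitrary ball $B$ with $\sigma B\subset \Omega$ and multiply/divide by the appropriate power of $|B|$ to rewrite
\begin{equation*}
|B|^{-1}\|u-u_B\|_{\varphi,B} = |B|^{k/n}\cdot |B|^{-(n+k)/n}\|u-u_B\|_{\varphi,B}.
\end{equation*}
The second factor on the right is bounded above by $\|u\|_{\varphi loc\; Lip_k,\Omega}$ by the definition of the $L^\varphi$-Lipschitz norm. For the first factor, since $B\subset\Omega$ and $\Omega$ is bounded, we have $|B|\leq|\Omega|$, so $|B|^{k/n}\leq |\Omega|^{k/n}$, which is a finite constant depending only on $\Omega$, $n$, and $k$.

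Taking the supremum over all balls $B$ with $\sigma B\subset \Omega$ then yields
\begin{equation*}
\|u\|_{\varphi*,\Omega} \leq |\Omega|^{k/n}\|u\|_{\varphi loc\; Lip_k,\Omega} = C\|u\|_{\varphi loc\; Lip_k,\Omega},
\end{equation*}
with $C=|\Omega|^{k/n}$, which is the desired inequality. There is essentially no obstacle here: the estimate is a purely scaling/algebraic comparison between the two definitions, and no properties of the homotopy operator, the $G(p,q,c)$-class, or the WRH-class are needed. The only place where one must be careful is to use the same $\sigma$ in both suprema (or, if the $\sigma$'s differ in spirit, to observe that enlarging the class of admissible balls only increases the supremum, so the inequality is preserved in the direction we want).
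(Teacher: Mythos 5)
Your proposal is correct and coincides with the paper's own argument: both factor $|B|^{-1}=|B|^{k/n}\,|B|^{-(n+k)/n}$, bound $|B|^{k/n}\leq|\Omega|^{k/n}$ using the boundedness of $\Omega$, and take the supremum over admissible balls. No further comment is needed.
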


\vspace{3mm}
\noindent
\begin{proof} From the definition of BMO norm, we have
\begin{eqnarray*}
\|u\|_{\varphi*,\Omega} &=& \sup_{\sigma{B}\subset\Omega }|B|^{-1}\|u-u_B\|_{\varphi,B} \\
 &=& \sup_{\sigma{B}\subset\Omega }|B|^{k/n}|B|^{-(n+k)/n}\|u-u_B\|_{\varphi,B} \\
 &\leq & \sup_{\sigma{B}\subset\Omega }|\Omega|^{k/n}|B|^{-(n+k)/n}\|u-u_B\|_{\varphi,B} \\
 & \leq & |\Omega|^{k/n}\sup_{\sigma{B}\subset\Omega }|B|^{-(n+k)/n} \|u-u_B\|_{\varphi,B} \\
 & \leq &{C}\sup_{\sigma{B}\subset\Omega }|B|^{-(n+k)/n} \|u-u_B\|_{\varphi,B} \\
 & \leq & {C}\|u\|_{ \varphi loc\; Lip_k,\Omega}.
\end{eqnarray*}
\end{proof}

Replacing $u$ by $Tu$, and combining Theorem \ref{d6}, we obtain the following corollary.
\begin{corollary}\label{d9}
Let $\varphi$ be a Young function in the class $G(p,q,c),1\leq p<q<\infty,c\geq1$, $u$ be a differential form such that $u\in WRH(\Lambda^l,\Omega)$-class,$l=1,2,\ldots,n,$ and $\varphi(|u|)\in L^1_{loc}(\Omega)$, where $\Omega$ is a bounded domain. Then, there exists a constant C, independent of u, such that
$$\|Tu\|_{\varphi *,\Omega}\leq C\|u\|_{\varphi,\Omega}.$$
\end{corollary}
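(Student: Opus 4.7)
The strategy is an immediate two-step chaining of the two theorems just proved, exactly as the remark before the corollary suggests. The plan is to first apply Theorem \ref{d8} to the differential form $Tu$ (instead of $u$) to bound its $L^\varphi$-BMO norm by its $L^\varphi$-Lipschitz norm, and then to apply Theorem \ref{d6} to bound the latter by $\|u\|_{\varphi,\Omega}$. Composing the two inequalities and absorbing constants yields the claim.

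In more detail, first I would verify that the hypotheses of Theorem \ref{d8} are satisfied with $u$ replaced by $Tu$. Since $T: C^\infty(\Omega,\Lambda^l)\to C^\infty(\Omega,\Lambda^{l-1})$, the form $Tu$ lies in $D'(\Omega,\Lambda^{l-1})$, and the condition $\varphi(|Tu|)\in L^1_{loc}(\Omega)$ can be obtained from the integral estimates already established in the proof of Theorem \ref{d6} (in particular, inequality (\ref{guocheng2}) combined with the assumption $\varphi(|u|)\in L^1_{loc}(\Omega)$ and the doubling of $\varphi$). With this in hand, Theorem \ref{d8} gives
\begin{equation*}
\|Tu\|_{\varphi\ast,\Omega}\;\leq\;C_1\,\|Tu\|_{\varphi loc\; Lip_k,\Omega}
\end{equation*}
for any $0<k<1$.

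Next I would invoke Theorem \ref{d6}, whose hypotheses (Young function $\varphi$ in $G(p,q,c)$-class, $u\in WRH(\Lambda^l,\Omega)$-class, $\varphi(|u|)\in L^1_{loc}(\Omega)$, $\Omega$ a bounded domain) are exactly those of the corollary. Theorem \ref{d6} yields
\begin{equation*}
\|Tu\|_{\varphi loc\; Lip_k,\Omega}\;\leq\;C_2\,\|u\|_{\varphi,\Omega}.
\end{equation*}
Chaining the two displayed inequalities and setting $C=C_1C_2$ gives the desired bound $\|Tu\|_{\varphi\ast,\Omega}\leq C\,\|u\|_{\varphi,\Omega}$.

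Since each of the two ingredients has already been proved, there is essentially no technical obstacle; the only mildly delicate point is confirming the $L^1_{loc}$-integrability of $\varphi(|Tu|)$, needed to apply Theorem \ref{d8} to $Tu$. That verification is routine from the chain of $L^\varphi$-norm estimates inside the proof of Theorem \ref{d6}, so I would state it briefly rather than redo the computation. The whole proof is therefore only a few lines long and amounts to combining the two previous theorems.
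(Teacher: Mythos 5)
Your proposal is correct and follows exactly the route the paper takes: the paper obtains Corollary \ref{d9} by replacing $u$ with $Tu$ in Theorem \ref{d8} and combining with Theorem \ref{d6}. Your additional remark on verifying $\varphi(|Tu|)\in L^1_{loc}(\Omega)$ is a point the paper glosses over, but it does not change the argument.
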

\section{Applications}\label{yy}

We call $u$ and $v$ a pair of
conjugate $A$-harmonic tensors in $\Omega$, if $u$ and $v$ satisfy
the conjugate $A$-harmonic equation
\begin{equation}\label{4.1}
  A(x, d u) =d^{\star} v,
\end{equation}
where $ A : \Omega \times \Lambda^l$
($\mathbb{R}^n$) $\to  \Lambda^l$($\mathbb{R}^n$) is invertible and satisfies the following
conditions:
\begin{equation}\label{4.2}
  |A(x, \xi)| \leq a|\xi|^{q-1} \ \ \  \hbox{and} \ \ \  <A(x, \xi),
\xi>\ \ \geq \ |\xi|^q
\end{equation}
 for almost every $x \in
\Omega$ and all $\xi \in \Lambda^l$ ($\mathbb{R}^n$). \rm
Here, $ a>0$ is
a constant and $1<q< \infty$ is a fixed exponent associated with
(\ref{4.1}).
In  recent years, the results for conjugate $A$-harmonic tensors are widely used in quasiregular mappings, and the theory of elasticity. In 1999, the following inequality for conjugate $A$-harmonic tensors in $\Omega$ was given by Nolder in \cite{nolder1999hardy},
$$\|u\|^q_{loc lip_k ,\Omega}\leq C \|v\|^p_{loc lip_k,\Omega},$$
where $0<l,k<1 $ satisfies $q(k-1)=p(l-1)$. Now, we give the $L^\varphi$-BMO norm estimate for conjugate $A$-harmonic tensors in $\Omega$.
\begin{theorem}\label{3323}
Let $\varphi$ be a Young function in the class $G(p,q,c)$ with $1\leq p<q<\infty,\frac{1}{p}+\frac{1}{q}=1$ and $c\geq1$. $u$ and $v$ are conjugate $A$-harmonic tensors  such that  $\varphi(|v|)\in L^1_{loc}(\Omega)$. The fixed exponent associated with conjugate $A$-harmonic equation is $q$. Then, there exists a constant C, independent of $u$ and $v$, such that
$$\|u\|_{\varphi *,\Omega}\leq C  |B|^\beta\|v\|_{\varphi *,\Omega},$$
where $\beta=1+\frac{1}{n}-\frac{p}{nq}$ and $\Omega$ is a bounded domain.
\end{theorem}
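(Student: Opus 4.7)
The plan is to mirror the proofs of Theorems \ref{d6} and \ref{d11}: I will use the $G(p,q,c)$-class structure to shuttle between the $L^\varphi$ integral and a Lebesgue norm, and invoke the classical Nolder-type inequality for conjugate $A$-harmonic tensors as the bridge between $u$ and $v$. Fix a ball $B$ with $\sigma B\subset\Omega$.

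First, I would repeat the chain of Jensen steps that produced inequality (\ref{fai1}) in the proof of Theorem \ref{d6}, applied to $u-u_B$: condition (1) of the $G(p,q,c)$-class (concave $h$, bound $h^{-1}\circ\varphi(t)\le c_2 t^q$) yields
$$\int_B \varphi(|u-u_B|)\,dx \le C_1\,\varphi\left(\left(\int_B |u-u_B|^q\,dx\right)^{1/q}\right).$$
Next, the conjugate $A$-harmonic estimate of the $\varphi(t)=t^s$ type cited from \cite{nolder1999hardy}, adapted to the oscillations $u-u_B$ and $v-v_{\sigma B}$ on each ball with exponents $s=q$, $t=p$, should give
$$\left(\int_B |u-u_B|^q\,dx\right)^{1/q} \le C_2\,|B|^{\beta}\left(\int_{\sigma B}|v-v_{\sigma B}|^p\,dx\right)^{1/p}.$$
Applying $\varphi$ and then invoking condition (2) of the $G(p,q,c)$-class (convex $g$, bound $g^{-1}\circ\varphi(t)\le c_2 t^p$) exactly as in the chain (\ref{4}) brings $\varphi$ back inside an integral, producing
$$\varphi\left(C_2|B|^{\beta}\left(\int_{\sigma B}|v-v_{\sigma B}|^p\,dx\right)^{1/p}\right) \le C_3\int_{\sigma B}\varphi\left(|B|^{\beta}|v-v_{\sigma B}|\right)dx.$$

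Combining the three displays and invoking the doubling of $\varphi$, I would then pass to the Luxemburg functional (as in the step from (\ref{guocheng1}) to (\ref{guocheng2})) to obtain
$$\|u-u_B\|_{\varphi,B} \le C_4\,|B|^{\beta}\,\|v-v_{\sigma B}\|_{\varphi,\sigma B}.$$
Dividing by $|B|$ and taking the supremum over balls $B$ with $\sigma B\subset\Omega$, absorbing the harmless factor between $|B|^{-1}$ and $|\sigma B|^{-1}$ into the constant, converts this into $\|u\|_{\varphi*,\Omega}\le C\,|B|^{\beta}\,\|v\|_{\varphi*,\Omega}$, which is the target.

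The main obstacle will be the Nolder-type inequality in the second display, with the precise exponent $\beta=1+\tfrac{1}{n}-\tfrac{p}{nq}$. Establishing it will combine the defining inequalities (\ref{4.2}) of the conjugate equation $A(x,du)=d^\star v$ (which yield the pointwise relations $|du|^q\le \langle d^\star v,du\rangle$ and $|d^\star v|^p\le a^p|du|^q$), an integration by parts against a suitable cut-off supported in $\sigma B$, and Poincaré--Sobolev inequalities applied separately on $B$ and $\sigma B$ to reintroduce the oscillations $u-u_B$ and $v-v_{\sigma B}$. The exponent $\beta$ should emerge from tracking a factor $|B|^{1/n}$ from each Poincaré step together with a factor $|B|^{1/q-1/p+1}$ from Hölder rescaling between the $q$- and $p$-integrals. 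Everything after this step is routine bookkeeping with the $G(p,q,c)$-class, already executed in the proofs of Theorems \ref{d6} and \ref{d11}.
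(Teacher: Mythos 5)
Your overall architecture matches the paper's: both start from inequality (\ref{fai1}), bridge the $q$-oscillation of $u$ to a $p$-integral of the oscillation of $v$ using the conjugate structure plus a Caccioppoli/Poincar\'e step, and finish with the $G(p,q,c)$ machinery and the Luxemburg functional. The paper assembles the bridge from $u-u_B=T(du)$ together with Lemma \ref{d3}, the pointwise inequality $|du|^q\le|{\ast}dv|^p$ quoted from Nolder, and the Caccioppoli inequality for conjugate $A$-harmonic tensors, choosing $\theta=v_B$ at the end.

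The genuine gap is the homogeneity of your second display. The structure conditions (\ref{4.2}) with $\frac{1}{p}+\frac{1}{q}=1$ give $\int_B|du|^q\,dx\le\int_B|{\ast}dv|^p\,dx$, hence
\[
\left(\int_B|u-u_B|^q\,dx\right)^{1/q}\le C|B|^{\beta}\left(\int_{\sigma B}|v-v_{\sigma B}|^p\,dx\right)^{1/q}=C|B|^{\beta}\,\|v-v_{\sigma B}\|_{p,\sigma B}^{\,p/q},
\]
i.e.\ the $v$-side enters with the power $p/q$, not linearly as in your display with outer exponent $1/p$. These are genuinely different: for the model pair $(\lambda u,\lambda^{q-1}v)$ (take $A(x,\xi)=|\xi|^{q-2}\xi$) your inequality reads $\lambda\le C\lambda^{q-1}$, which fails as $\lambda\to0$ because $q>2$ whenever $p<q$ and $\frac{1}{p}+\frac{1}{q}=1$; the $p/q$-power version is exactly scale-balanced since $(q-1)p/q=1$. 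So the display you flag as ``the main obstacle'' is not merely hard --- it is false as stated, and no integration by parts will produce it. Repairing it forces you to carry the exponent $1/q$ on the $p$-integral of $v$ through the Orlicz step, which is what the paper does in (\ref{2.12}) (writing $\varphi$ of a $1/q$-power and then invoking the convex component $g$ with $g(t^p)\le c\varphi(t)$); your cleaner application of condition (1) to a $1/p$-power is then unavailable, and the final passage from the $p/q$-homogeneous estimate to the linear norm inequality $\|u\|_{\varphi *,\Omega}\le C|B|^{\beta}\|v\|_{\varphi *,\Omega}$ requires an additional normalization argument that neither your sketch nor, admittedly, the paper's closing sentence supplies in detail.
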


\noindent
\begin{proof} From the inequality (\ref{fai1}) in Theorem \ref{d6}  and Lemma \ref{d3}, we have
\begin{eqnarray}\label{2.11}
\int_B\varphi\left(|u-u_B|\right)dx
&\leq&C_1\varphi\left(\left(\int_B|u-u_B|^qdx\right)^{1/q}\right)\cr
&\leq&C_1\varphi\left(C_2|B|^{1+\frac{1}{n}}\left (\int_B|du|^qdx\right)^{1/q}\right).\nonumber
\end{eqnarray}
Using the inequality $|du|^q\leq |\ast dv|^p$(which appears in Theorem $3.1$ in \cite{nolder1999hardy}), we obtain
\begin{eqnarray}\label{2.12}
&&\varphi\left(C_2|B|^{1+\frac{1}{n}}\left (\int_B|du|^qdx\right)^{1/q}\right)
\cr
&\leq&\varphi\left(C_2|B|^{1+\frac{1}{n}}\left (\int_B|\ast dv|^pdx\right)^{1/q}\right)\cr
&=&\varphi\left(C_2|B|^{1+\frac{1}{n}}\| d\ast v\|_{p,B}^{p/q}\right)\cr
&\leq&\varphi\left(C_2|B|^{1+\frac{1}{n}}\left(C_3|B|^{\frac{1}{n}}\|\ast v-\ast \theta\|_{p,\rho B}\right)^{p/q}\right)\cr
&\leq&\varphi\left(C_4|B|^{1+\frac{1}{n}-\frac{p}{nq}}\left(\int_{\rho B}|\ast v-\ast \theta|^pdx\right)^{1/q}\right)\cr
&\leq&C_5g\left(C_4^q|B|^{q+\frac{q}{n}-\frac{p}{n}}\left(\int_{\rho B}|\ast v-\ast \theta|^pdx\right)\right)\cr
&\leq&C_5C_4^q|B|^{q+\frac{q}{n}-\frac{p}{n}}\int_{\rho B}g\left(|\ast v-\ast \theta|^p\right)dx,
\end{eqnarray}
where $\theta$ is any closed form, and the third inequality is from the Caccioppoli inequality for conjugate $A$-harmonic tensors. The properties of $G(p,q,c)$-class yields
\begin{eqnarray}
\int_{\rho B}g\left(|\ast v-\ast \theta|^p\right)dx
&\leq& C_6\int_{\rho B}\varphi\left(| v-\theta|\right)dx.
\end{eqnarray}
Choose $\theta=v_B$, and similar to the proof of inequalities (\ref{guocheng2}) and (\ref{guocheng3}) in Theorem \ref{d6}, we have
$$
  \|u\|_{\varphi *,\Omega}\leq C  |B|^\beta\|v\|_{\varphi *,\Omega}.
$$
\end{proof}
\vskip 8 pt

\vskip 8 pt

Next, we give a weighted estimate for differential forms. The weight we choose is called  $A(\alpha,\beta,\gamma,\Omega)$ weight which satisfies $\omega(x)>0$ a.e., and $$\sup_{B\subset \Omega}\left({{1}\over{|B|}}\int_B\omega^\alpha\,dx\right)\left({{1}\over{|B|}}\int_B\omega^{-\beta}\,dx\right)^{\gamma/\beta}<\infty$$
for some positive constants $\alpha,\beta,\gamma$. One may readily see that the well-known $A_p$ weight is a special $A(\alpha,\beta,\gamma,\Omega)$ weight, more properties for $A(\alpha,\beta,\gamma,\Omega)$ weight see \cite{10}. We need the following  lemma for Orlicz functions.
\begin{lemma}\label{iit}
Let $\varphi$ be a Young function such that $\varphi(x)\leq x^p$ for any $x>0$, $u\in L^p(\Omega,\Lambda^{l}),l=1,2,\cdots,n,$  be a differential form in $\Omega$. Then, for any $\omega\in A(\alpha,\beta,\gamma,\Omega)$, we have
$$\|u\|_{\varphi
,\omega ,B}\leq{C}\|u\|_{p,\omega,B},$$
 where $C$ is a constant independent of $u$.
\end{lemma}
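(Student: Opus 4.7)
The approach is to read off the conclusion directly from the definition of the Luxemburg functional combined with the pointwise hypothesis $\varphi(x)\le x^{p}$. The weighted Orlicz norm $\|\cdot\|_{\varphi,\omega,B}$ is computed with respect to the Radon measure $d\mu=\omega\,dx$ (exactly as set up in Section 2), so the role of the weight $\omega\in A(\alpha,\beta,\gamma,\Omega)$ is only to furnish such a measure; no quantitative $A(\alpha,\beta,\gamma,\Omega)$ information will actually be invoked in this particular lemma.

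First I would set $M=\|u\|_{p,\omega,B}$ and dispose of the trivial case $M=0$: then $|u|=0$ almost everywhere with respect to $\omega\,dx$, and both sides of the asserted inequality vanish. Assuming $M>0$, for any $\lambda>0$ the hypothesis $\varphi(x)\le x^{p}$, applied pointwise with $x=|u(y)|/\lambda$, gives
$$\int_{B}\varphi\!\left(\frac{|u|}{\lambda}\right)\omega\,dx\;\le\;\int_{B}\left(\frac{|u|}{\lambda}\right)^{p}\omega\,dx\;=\;\frac{M^{p}}{\lambda^{p}}.$$
The choice $\lambda=M$ makes the right-hand side equal to $1$, so $\lambda=M$ is admissible in the infimum defining $\|u\|_{\varphi,\omega,B}$. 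Passing to the infimum yields
$$\|u\|_{\varphi,\omega,B}\;\le\;M\;=\;\|u\|_{p,\omega,B},$$
which is the conclusion with $C=1$.

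I do not foresee any real obstacle: the assumption $\varphi(x)\le x^{p}$ is tailor-made so that the Luxemburg integrand is dominated by the weighted $p$-th power integrand. The only minor bookkeeping point is to be explicit that the module $|u|$ of the differential form is the scalar function being integrated in both norms, which matches the conventions set in Section 2. Should a later application require a constant $C$ depending on $\omega$ via its $A(\alpha,\beta,\gamma,\Omega)$ characteristic, one would insert a H\"older-type step between the Orlicz integral and the weighted $L^{p}$ integral; but for the statement as given, $C=1$ is already enough, and the argument fits comfortably in a few lines.
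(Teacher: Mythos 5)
Your argument is correct and is essentially identical to the paper's own proof: both test the Luxemburg functional at $\lambda=\|u\|_{p,\omega,B}$, use $\varphi(x)\le x^{p}$ pointwise to bound the Orlicz integral by $1$, and conclude $\|u\|_{\varphi,\omega,B}\le\|u\|_{p,\omega,B}$ with $C=1$. Your explicit handling of the degenerate case $\|u\|_{p,\omega,B}=0$ is a minor tidiness improvement over the paper, which implicitly divides by that norm.
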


\noindent
\begin{proof} Young function $\varphi\geq 0$ gives

\begin{eqnarray*}
\int_B\varphi\left(\frac{|u(x)|}{\|u(x)\|_{p,\omega,B}}\right)\omega(x)dx&\leq&\int_B\left(\frac{|u(x)|}{\|u(x)\|_{p,\omega,B}}\right)^p\omega(x)dx\\
&=&\frac{\int_B|u(x)|^p\omega(x)dx}{\|u(x)\|^p_{p,\omega,B}}\\
&=&1.
\end{eqnarray*}
Then, according to the definition of $L^{\varphi}$-norm, it implies that
$$
\inf\left\{\lambda>0:\int_B\varphi\left(\frac{|u(x)|}{\lambda}\right)\omega(x)dx\leq 1\right\}\leq \|u(x)\|_{p,\omega,B}.
$$
That is
$$\|u\|_{\varphi,\omega,B}\leq\|u\|_{p,\omega,B}.$$
\end{proof}
\begin{theorem}\label{T4.3}
 Let $\varphi$ be a Young function such that $\varphi(x)\leq x^s$, $u\in{L^{p}(\Omega,\Lambda^{l},\mu)}, l=1, 2, \cdots, n$,  be a differential form in $\Omega$,  Radon measure $\mu$ is defined by $\omega(x)dx=d\mu$, and $\omega(x)\in{A(\alpha,\beta,\gamma,\Omega)}$ for some $\alpha>1$,   $\beta=\frac{\alpha{q}}{\alpha{p}-p-\alpha{q}}$, $\gamma=\frac{\alpha{q}}{p}$, and $\alpha{p}-p-\alpha{q}>0,$ where $1\leq s<q<\infty$. Then, there exists a constant $C$, independent of $u$, such that
$$\|u\|_{\varphi loc\; Lip_k,\omega,\Omega}\leq{C}\|u\|_{p,\omega,\Omega},$$
where $k$ is a constant with $0<{k}<1.$
\end{theorem}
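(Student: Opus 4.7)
The strategy is to chain four norm comparisons: weighted $L^\varphi\to L^s\to L^q\to L^p$. First, since $\varphi(x)\le x^s$ and $s<q$, Lemma \ref{iit} (applied with $s$ in place of $p$) yields
$$\|u-u_B\|_{\varphi,\omega,B}\le C\|u-u_B\|_{s,\omega,B}.$$
Next, because $s<q$ and $\mu(B)$ is finite, Hölder's inequality on $(B,d\mu)$ gives $\|u-u_B\|_{s,\omega,B}\le\mu(B)^{1/s-1/q}\|u-u_B\|_{q,\omega,B}$, and the triangle inequality together with a weighted analogue of Lemma \ref{d31} applied to the closed form $u_B$ reduces the right-hand side to $C\|u\|_{q,\omega,B}$.

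The core step is bounding $\|u\|_{q,\omega,B}$ by $\|u\|_{p,\omega,B}$ using the weight class, and here the specific parameters $\beta$ and $\gamma$ come into play through a two-step Hölder argument. First, with conjugate exponents $\alpha$ and $\alpha'=\alpha/(\alpha-1)$,
$$\int_B|u|^q\omega\,dx\le\left(\int_B|u|^{q\alpha'}dx\right)^{1/\alpha'}\left(\int_B\omega^\alpha dx\right)^{1/\alpha}.$$
Second, writing $|u|^{q\alpha'}=(|u|^p\omega)^{q\alpha'/p}\,\omega^{-q\alpha'/p}$ and applying Hölder with $p/(q\alpha')$ and $p/(p-q\alpha')$, I introduce a factor $\int_B\omega^{-q\alpha'/(p-q\alpha')}dx$. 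Matching the negative exponent on $\omega$ to $-\beta$ forces precisely $\beta=\alpha q/(\alpha p-p-\alpha q)$, which is the hypothesis on $\beta$; the hypothesis $\alpha p-p-\alpha q>0$ simultaneously guarantees $p>q\alpha'$, so the Hölder pair is admissible.

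Plugging the two Hölder bounds together, the product $\int_B\omega^\alpha dx\cdot\left(\int_B\omega^{-\beta}dx\right)^{\gamma/\beta}$ appears, and the hypothesis $\gamma=\alpha q/p$ makes the power on $\int_B\omega^{-\beta}dx$ agree with the one in the $A(\alpha,\beta,\gamma,\Omega)$ definition. The weight integrals therefore collapse into a pure power of $|B|$, yielding $\|u\|_{q,\omega,B}\le C|B|^\lambda\|u\|_{p,\omega,B}$ for an explicit $\lambda\ge 0$ computable from $\alpha,\beta,\gamma,p,q$. Threading this back through the earlier two comparisons and multiplying by the Lipschitz prefactor $|B|^{-(n+k)/n}$, any surviving positive power of $|B|$ is absorbed into a constant depending on the bounded domain $\Omega$, and taking the supremum over $\sigma B\subset\Omega$ finishes the proof.

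The main obstacle I anticipate is exponent bookkeeping: verifying that the three Hölder exponents simultaneously satisfy the required constraints imposed by the parameters of the $A(\alpha,\beta,\gamma,\Omega)$-class, and checking that the final overall power of $|B|$ is indeed non-negative so that boundedness of $\Omega$ absorbs it. A secondary technical point is that Lemma \ref{d31} is stated without weights, so one must either invoke its weighted analogue or, for the $0$-form case, bound $|u_B|\le|B|^{-1}\int_B|u|\,dx$ directly and dualize against $\omega$ via one more Hölder step before feeding the resulting estimate into the chain.
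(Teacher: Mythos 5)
Your proposal follows essentially the same route as the paper: Lemma \ref{iit} to pass from the weighted $L^\varphi$ norm to the weighted $L^s$ norm, monotonicity to reach the weighted $L^q$ norm, and then a two-step H\"older argument whose exponent bookkeeping you carry out exactly as the paper does --- the negative power $-q\alpha'/(p-q\alpha')=-\beta$, the admissibility condition $p>q\alpha'$ coming from $\alpha p-p-\alpha q>0$, and the matching of the power on $\int_B\omega^{-\beta}\,dx$ with $\gamma/\beta$ all coincide with the paper's computation, as does the final absorption of the positive power of $|B|$ (the paper records it as $\frac1n+\frac1s-\frac1q-\frac kn>0$).

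The one substantive divergence is the placement of the step that removes $u_B$, and it is the weak point of your ordering. You propose to pass from $\|u-u_B\|_{q,\omega,B}$ to $C\|u\|_{q,\omega,B}$ \emph{before} applying H\"older, which requires a weighted analogue of Lemma \ref{d31}; no such analogue is available in the paper, and the fallback you sketch (bounding $|u_B|$ by the average of $|u|$) only applies to $0$-forms, whereas the theorem is stated for $l=1,\dots,n$, where $u_B=d(Tu)$ is not an average. The paper avoids this entirely by applying the first H\"older inequality to $u-u_B$ itself, so that $u_B$ is removed in the resulting \emph{unweighted} $L^{\alpha q/(\alpha-1)}$ norm, where the unweighted Lemma \ref{d31} (via the triangle inequality) suffices. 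Reordering your argument accordingly --- H\"older first, then drop $u_B$ in the unweighted norm, then the second H\"older --- closes the gap and reproduces the paper's proof.
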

\begin{proof}  Applying Lemma \ref{iit}, we have
\begin{eqnarray*}
\|u(x)-u_B\|_{\varphi,\omega,B}&\leq &\|u(x)-u_B\|_{s,\omega,B}\cr
&\leq& |B|^{\frac{1}{s}-\frac{1}{q}}\|u(x)-u_B\|_{q,\omega,B} \cr
&= &|B|^{\frac{1}{s}-\frac{1}{q}}\left(\int_B|u(x)-u_B|^q\omega(x)dx\right)^\frac{1}{q}.
\end{eqnarray*}
Similar to the proof of Theorem 4.2 in \cite{Li2015Lipschitz}, using the H\"{o}lder inequality to get
\begin{eqnarray*}
&&\left(\int_B|u(x)-u_B|^q\omega(x)dx\right)^\frac{1}{q}\cr
&\leq& \left(\int_B|u(x)-u_B|^\frac{\alpha q}{\alpha-1}dx\right)^\frac{\alpha-1}{\alpha q}\left(\int_B\omega(x)^\alpha dx\right)^\frac{1}{\alpha q}\cr
&\leq&  C_1|B|^{1+\frac{1}{n}}\left(\int_B|u(x)|^\frac{\alpha q}{\alpha-1}dx\right)^\frac{\alpha-1}{\alpha q}\left(\int_B\omega(x)^\alpha dx\right)^\frac{1}{\alpha q}\cr
&\leq&  C_1|B|^{1+\frac{1}{n}}\left(\int_B|u(x)|^p\omega(x)dx\right)^\frac{1}{p}\cr
&&\ \ \ \ \ \ \ \ \ \ \ \ \
\times \left(\int_B\omega(x)^\alpha dx\right)^\frac{1}{\alpha q}\left(\int_B(\omega(x)^{-1})^\frac{\alpha q}{\alpha p-p-\alpha q}dx\right)^\frac{\alpha p-p-\alpha q}{\alpha pq}\cr
&\leq& C_2|B|^{1+\frac{1}{n}} \|u(x)\|_{p,B,\Omega}.
\end{eqnarray*}
The boundedness of the second part in the penultimate inequality above is due to  that $\omega(x)\in{A(\alpha,\beta,\gamma,\Omega)}$. Finally, noting that $\frac{1}{n}+\frac{1}{s}-\frac{1}{q}-\frac{k}{n}>0,$ for $0\leq k\leq1,$ and combining the definition of $L^\varphi$-Lipschitz norm, we can complete the proof of Theorem \ref{T4.3}.
\end{proof}

\noindent
Remark 2: Note that the $A(\alpha, \beta, \gamma; \Omega)$-class is an extension of several existing weight classes which contain $A^\lambda_r(\Omega)$-weight, $A_r(\lambda, \Omega)$-weight and $A_r(\Omega)$-weight. Thus, these conclusions obtained in this paper will change into the corresponding versions when we take some weight as a special case.

\vspace{3mm}
\noindent
{\bf Conflict of Interests}
The authors declare that there is no conflict of interests regarding the publication of this
article.

\vspace{3mm}

\noindent
{\bf Authors' Contributions}
All authors put their efforts together on the research and writing of this manuscript.  Xuexin Li
carried out the proofs of all research results in this manuscript, and wrote its draft. Yuming Xing and Jinling Niu proposed the study, participated in its design and revised its final version.  All authors
read and approved the final manuscript.
\bibliographystyle{amsplain}

\end{document}